\newcommand{\norm}[1]{\left\Vert #1 \right\Vert}
\DeclareMathOperator{\sech}{sech}
\newtheorem{theorem}{Theorem}
\newtheorem{lemma}{Lemma}
\newtheorem{corollary}{Corollary}
\newtheorem{assumption}{Assumption}
\theoremstyle{definition}
\theoremstyle{remark}
\title
[
	Travelling waves
]
{
	Travelling waves in the
	Boussinesq type systems
}
\author[Dinvay]{Evgueni Dinvay}
\email{ Evgueni.Dinvay@inria.fr }
\address
{
	Inria Rennes - Bretagne Atlantique
	\\
	Campus universitaire de Beaulieu Avenue du G\'en\'eral Leclerc
	\\
	35042 Rennes Cedex
	\\
	France
}
\date{\today}
\subjclass[2010]{35Q35, 35Q51, 76B25}
\keywords{
	Water waves, Boussinesq systems, Solitary waves.
}
\begin{document}

\begin{abstract}
Considered herein are a number of variants of the Boussinesq type systems
modelling surface water waves.
Such equations were derived by different authors
to describe the two-way propagation of long gravity waves.
A question of existence of special solutions,
the so called solitary waves, is of a particular interest.
There are a number of studies relying on a variational approach
and a concentration-compactness argument.
These proofs are technically very demanding and may vary
significantly from one system to another.
Our approach is based on the implicit function theorem,
which makes the treatment easier and more unified.

%
%
\end{abstract}

\maketitle
\section{Introduction}
\setcounter{equation}{0}

Consideration is given to the following one-dimensional
Boussinesq-type system
\begin{equation}
\label{Regularised_sys}
\left\{
\begin{aligned}
	K_b \partial_t \eta 
	+  K_a \partial_x v + \partial_x (\eta v)
	&= 0
	, \\
	K_d \partial_t v
	+ K_c \partial_x \eta
	+ \partial_x v^2 / 2
	&= 0
	,
\end{aligned}
\right.
\end{equation} 
where $ K_a, \ldots, K_d$ are Fourier multiplier operators in
the space of tempered distributions $\mathcal S'(\mathbb R)$.
The space variable is $x \in \mathbb R$ and the time variable
is $t \in \mathbb R$.
The unknowns $\eta$, $v$ are real valued functions of these
variables,
representing the free surface elevation
and velocity at a different level in the fluid layer, respectively.
A classical example is the so called $(a,b,c,d)$-system with
\(
	K_a = 1 + a \partial_x^2
	,
\)
\(
	K_b = 1 - b \partial_x^2
	,
\)
\(
	K_c = 1 + c \partial_x^2
	,
\)
\(
	K_d = 1 - d \partial_x^2
	,
\)
introduced in \cite{Bona_Chen_Saut1}.
Other examples with non-local operators will be given in Conclusion.

Abusing slightly notations and exploiting the travelling wave ansatz
$\eta(x,t) = \eta(x - \omega t)$,
$v(x,t) = v(x - \omega t)$, with $\omega \in \mathbb R$,
we rewrite System \eqref{Regularised_sys} in the form
\begin{equation}
\label{travelling_sys}
\left\{
\begin{aligned}
	- \omega K_b \eta 
	+  K_a v + \eta v
	&= 0
	, \\
	- \omega K_d v
	+ K_c \eta
	+ v^2 / 2
	&= 0
	.
\end{aligned}
\right.
\end{equation} 
Note that we assume $\eta$, $v$ vanishing at infinity.
If $v$ is known then obviously  $\eta$ can be found  through the second equation as
\[
	\eta = \omega  K_c^{-1} K_d v
	- K_c^{-1} v^2 / 2
	.
\]
Thus the problem reduces to investigation of a single equation on $v$
of the form
\begin{equation}
\label{solitary_equation}
	\left( \omega^2 - M^2 \right) v
	=
	\omega F v^2
	+
	\omega  G( v H v )
	+
	T(v,v,v)
	,
\end{equation}
where $F, G, H$ are Fourier multipliers
and $T$ is a trilinear bounded operator in some Sobolev space
$H^s_e(\mathbb R)$ of even functions.
Assigning, in particular,
$ M = \sqrt{ K_a K_b^{-1} K_c K_d^{-1} } $,
$F = K_d^{-1} / 2$,
$G = K_b^{-1} K_c K_d^{-1}$,
$H = K_c^{-1} K_d$
and
$T(v,v,v) = K_b^{-1} K_c K_d^{-1} \left( v K_c^{-1} v^2 / 2 \right)$,
we arrive back to the initial problem \eqref{travelling_sys}.
The general equation \eqref{solitary_equation}
is of main interest below.

Different particular versions 
of System \eqref{Regularised_sys}
appeared in
\cite{
	Aceves_Sanchez_Minzoni_Panayotaros,
	Bona_Chen_Saut1,
	Dinvay_Dutykh_Kalisch,
	Emerald2020,
	Hur_Pandey}.
For some of them existence of solitary wave solutions
was proved in
\cite{
	Chen_Nguyen_Sun2010,
	Chen_Nguyen_Sun2011,
	Dinvay_Nilsson,
	Nilsson_Wang}.
These results are obtained by
reformulation of \eqref{travelling_sys}
as a variational problem, different in each case,
and by appealing to the concentration-compactness principle.
The corresponding proofs are technically very demanding and vary
significantly from one particular system to another.
Another shortcoming of this approach is that uniqueness
of minimizers needs in general additional studies.
On the other hand such treatment can sometimes be advantageous,
as for example in \cite{Chen_Nguyen_Sun2010},
where also stability for the set of minimizers was obtained.

The variational approach has been extensively used to prove
existence of solitary wave solutions to
the single unidirectional travelling wave equation 
\begin{equation}
\label{Whitham}
	\omega v = Mv + n(v)
\end{equation}
as well, see
\cite{Ehrnstrom_Groves_Wahlen, Weinstein1987},
for instance.
An alternative elegant proof
with an entirely different
approach was given by Stefanov and Wright
\cite{Stefanov_Wright2018}.
They have rescaled the travelling wave equation \eqref{Whitham}
introducing a small parameter that lead in the limit
to the KdV travelling wave equation.
Now existence together with uniqueness come from appeal
to the implicit function theorem.
In the current work we extend their approach
to Equation \eqref{solitary_equation}.
Note that it provides us with a unified treatment
for all systems \eqref{Regularised_sys},
including those of them that have not been proved
possessing solitary waves so far.
We regard concrete examples in Conclusion.
One can also add, that
the implicit function argument normally gives small solutions,
whereas the variational method may produce big ones as well.

\section{Notations, Assumptions and Main result}
\setcounter{equation}{0}

We start this section by recalling
all the necessary standard notations.
For any positive numbers $a$ and $b$ we write
$a \lesssim b$ if there exists a constant $C$ independent
of $a, b$ such that $a \leqslant Cb$.
The Fourier transform is defined by the usual formula
\[
	\widehat{f}(\xi) = \mathcal F(f)(\xi) =
	\int	 f(x) e^{-i\xi x} dx
\]
on Schwartz functions.
By the Fourier multiplier operator $\varphi(D)$
with symbol $\varphi$ we mean
the line
\(
	\mathcal F \left( \varphi(D) f \right)
	=
	\varphi(\xi) \widehat{f}(\xi)
	.
\)
In particular, $D = -i\partial_x$ is
the Fourier multiplier associated with the symbol
$\varphi(\xi) = \xi$.
For any $\alpha \in \mathbb R$
the Bessel potential of order $-\alpha$ is
the Fourier operator $J^{\alpha} = \langle D \rangle ^{\alpha}$,
where we exploit the symbol notation
$J(\xi) = \langle \xi \rangle = \sqrt{1 + \xi^2}$.
Note that below we stick to this abuse of notations,
namely we notate Fourier multipliers and their symbols
by the same letters, for example,
$F$ appeared as an operator in \eqref{solitary_equation}
can stand for the corresponding symbol $F(\xi)$ in other context.
The $L^2$-based Sobolev space $H^{\alpha} (\mathbb R)$ is defined
by the norm
\(
	\norm{f} _{H^{\alpha}}
	=
	\norm{J^{\alpha}f} _{L^2}
	,
\)
whereas
$H_e^{\alpha} (\mathbb R)$
is its subspace of even functions.

In order to study Equation \eqref{solitary_equation}
we will need to rescale it by a scalar parameter
$\varepsilon \in \mathbb R \setminus \{ 0 \}$.
For any function $\varphi : \mathbb R \to \mathbb R$
we use notation
\(
	\varphi_{\varepsilon}(x)
	=
	\varphi (\varepsilon x)
	.
\)
For a Fourier multiplier $F$ by $F_{\varepsilon}$
we mean the operator with the symbol
$F_{\varepsilon} (\xi) = F(\varepsilon \xi)$.
Notation $F_0$ stands for the constant $F(0)$.
In general, for any $n$-linear operator $B$
we define operator $B_{\varepsilon}$ by the line
\[
	B_{\varepsilon}(f^1, \ldots, f^n)
	=
	\left( B(f^1_{\varepsilon}, \ldots, f^n_{\varepsilon}) \right)
	_{ \frac 1  {\varepsilon} }
	,
	\quad
	\varepsilon \ne 0
	.
\]
We deliberately use the same notation for rescaled
general operators and Fourier multipliers,
since obviously
rescaling of a Fourier multiplier (by rescaling the symbol)
coincides with the composition of rescaling on the physical side
by $\varepsilon$ and $1/\varepsilon$.

Solvability of \eqref{solitary_equation} is investigated
by approximation with the Korteweg–de Vries equation
\begin{equation}
\label{KdV}
	- \partial_x^2 v + v - \gamma v^2 = 0
	,
\end{equation}
where $\gamma \ne 0$ is to be given below.
It has a unique solution
\(
	v = \sigma
\)
with
\begin{equation}
\label{sigma_definition}
	\sigma(x) =
	\frac 3{2\gamma} \sech^2 \left( \frac x2 \right)
	.
\end{equation}

\begin{assumption}
\label{main_assumption}
	The multiplier symbols
	\(
		M, F, G, H : \mathbb R \to \mathbb R
	\)
	are even and  there exist $\xi_1 > 0$ and $\beta < 1$
	such that
	\begin{enumerate}
		\item
		\label{M_assumption_1}
		\(
			M \in C^{3,1} ([ - \xi_1, \xi_1 ])
		\)
		that is, it has Lipschitz continuous third derivative,
		with
		\begin{equation}
		\label{M_assumption_inequality_1}
			m_1 = \sup _{ |\xi| \geqslant \xi_1 } M(\xi) < M(0)
			,
		\end{equation}
		\begin{equation}
		\label{M_assumption_inequality_2}
			m_2 = \sup _{ |\xi| \leqslant \xi_1 } M''(\xi) < 0
			;
		\end{equation}
		\item
		$M$ is bounded from below as
		\begin{equation}
			m_3 = \inf _{ \xi \in \mathbb R } ( M(\xi) + M(0) ) > 0
			;
		\end{equation}
		\item
		there exist derivatives $M', F', G', H'$ on $\mathbb R$
		satisfying
		\begin{equation}
			\left| M'(\xi) \right|
			,
			\left| F'(\xi) \right|
			,
			\left| G'(\xi) \right|
			,
			\left| H'(\xi) \right|
			\lesssim \langle \xi \rangle ^{\beta}
			;
		\end{equation}
		\item
		there exist bounded derivatives $F'', G'', H''$
		on $[- \xi_1, \xi_1]$;
		\item
		the multiplier symbol $F$ is bounded on $\mathbb R$;
		\item
		$G$ is smoothing $H$, that is there exists $s_h \geqslant 0$
		satisfying
		\[
			|G(\xi)| \lesssim \langle \xi \rangle ^{-s_h}
			, \quad
			|H(\xi)| \lesssim \langle \xi \rangle ^{s_h}
			;
		\]
		\item
		there is a relation between $F_0, G_0$ and $H_0$
		of the form
		\begin{equation}
		\label{FGH_assumption_inequality}
			\gamma =
			- \frac 1{ M''(0) }
			\left[
				F(0) + G(0)H(0)
			\right]
			\ne 0
			.
		\end{equation}
	\end{enumerate}
\end{assumption}

Operators $M, F, G, H$ are defined in subspaces of even functions,
due to evenness of the corresponding symbols.
One can easily check that there is no loss of generality
in assuming that $\xi_1 > 0$ and $\beta < 1$ are unique
for all symbols $M, F, G, H$,
since one can always take the minimum such $\xi_1$
and the maximum such $\beta$.
Clearly, $s_h \leqslant 1 + \beta < 2$.
Note that for $s > 1/2$
and $s \geqslant s_h / 2$
the bilinear part in Equation \eqref{solitary_equation}
is bounded in $H^s(\mathbb R) \times H^s(\mathbb R)$,
as will be shown below.

\begin{assumption}
\label{T_assumption}
	Let $s \in \mathbb R$ be such that
	$T$ is a trilinear bounded operator in $H^s_e(\mathbb R)$
	and for some $s_t < 2$ it holds true that
	\[
		\norm{ T_{\varepsilon} } \lesssim | \varepsilon |^{-s_t}
		, \quad
		\norm{ T_{\varepsilon}(\sigma, \sigma, \sigma) }
		_{H^s}
		= o \left( \frac 1{\varepsilon} \right)
	\]
	as $\varepsilon \to 0$.
	Here $\sigma$ is defined in \eqref{sigma_definition}.
\end{assumption}

Both inequalities are easy to check for the concrete
examples regarded in the last section.
Note that
\(
		\norm{ T_{\varepsilon} } \leqslant | \varepsilon |^{ - s - 1 }
		\norm{ T }
\)
for $0 < |\varepsilon| \leqslant 1$.
The first inequality is guaranteed automatically  provided $s < 1$,
and the second one provided $s < 0$.

In the next theorem we prove existence of small
travelling waves corresponding to velocities
slightly above $M(0)$.
We introduce the wave speed parametrised by $\varepsilon \in \mathbb R$
as follows
\begin{equation}
\label{omega_definition}
	\omega_{\varepsilon} = M(0) - \frac 12 M''(0) \varepsilon^2
	.
\end{equation}

\begin{theorem}
\label{main_theorem}
	Let Assumptions \ref{main_assumption} and \ref{T_assumption}
	be met with
	$s \geqslant 1$.
	Then there exists $\varepsilon_0 > 0$,
	so that for every
	\(
		\varepsilon \in (0, \varepsilon_0)
		,
	\)
	there is a unique travelling wave
	solution to Equation \eqref{solitary_equation},
	associated with the phase velocity
	\(
		\omega = \omega_{\varepsilon}
	\)
	given by \eqref{omega_definition},
	of the form
	\(
		v(x) = \varepsilon^2 V^{\varepsilon} (\varepsilon x)
	\)
	with $V^{\varepsilon} \in H_e^s(\mathbb R)$ satisfying
	\[
		V^{\varepsilon}
		=
		\sigma + o_{H^s} ( \varepsilon )
		.
	\]
\end{theorem}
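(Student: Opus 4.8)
The strategy is to follow Stefanov–Wright: rescale equation \eqref{solitary_equation} with the ansatz $v(x)=\varepsilon^2 V(\varepsilon x)$ and the speed \eqref{omega_definition}, and show the resulting equation for $V$ has the form $\mathcal L_\varepsilon V = \mathcal N_\varepsilon(V)$ where $\mathcal L_\varepsilon \to \mathcal L_0 = -\partial_x^2 + 1$ (in a suitable sense) and the nonlinearity converges to $\gamma V^2$. Then the KdV profile $\sigma$ from \eqref{sigma_definition} solves the limiting equation, and an implicit-function-theorem / quantitative fixed-point argument around $\sigma$ produces the solution $V^\varepsilon$ with the stated rate. I would organize the write-up in four steps.

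\emph{Step 1: the rescaled equation.} Substitute $v(x)=\varepsilon^2 V(\varepsilon x)$ into \eqref{solitary_equation}. Using $\omega_\varepsilon^2 - M(\varepsilon\xi)^2 = \big(\omega_\varepsilon - M(\varepsilon\xi)\big)\big(\omega_\varepsilon+M(\varepsilon\xi)\big)$ and the Taylor expansion $M(\varepsilon\xi) = M(0) + \tfrac12 M''(0)\varepsilon^2\xi^2 + O(\varepsilon^4\langle\xi\rangle^4)$ on $[-\xi_1,\xi_1]$ (valid by $M\in C^{3,1}$, Assumption \ref{main_assumption}(\ref{M_assumption_1})), one finds that on the relevant frequency range $\varepsilon^{-2}\big(\omega_\varepsilon - M(\varepsilon\xi)\big)\big(\omega_\varepsilon + M(\varepsilon\xi)\big) = -M''(0)M(0)\,(\xi^2+1) + (\text{error})$. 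Dividing the whole equation by the appropriate power of $\varepsilon$ and by $-M''(0)M(0)$, the left side becomes $(-\partial_x^2+1)V + \varepsilon\text{-small}$, while the right side becomes $\gamma V^2$ plus lower-order terms: the $\omega F v^2$ term contributes $\omega_\varepsilon F(\varepsilon\xi)\cdot\varepsilon^4 \widehat{V^2} \sim \varepsilon^4 M(0)F(0)\widehat{V^2}$, the $\omega G(vHv)$ term contributes $\sim \varepsilon^4 M(0)G(0)H(0)\widehat{V^2}$ after observing $H(\varepsilon\xi)\to H(0)$ (the smoothing hypothesis Assumption \ref{main_assumption}(6) controls $Hv$ uniformly), and $T(v,v,v)$ is $o(\varepsilon^{5})$ in $H^s$ by the second bound in Assumption \ref{T_assumption}. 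Collecting, $V$ solves
\begin{equation}
\label{plan_rescaled}
	V = (-\partial_x^2 + 1)^{-1}\Big[ \gamma V^2 + R_\varepsilon(V) \Big],
\end{equation}
where $R_\varepsilon$ gathers all the error terms, with $\|R_\varepsilon(\sigma)\|_{H^{s-2}} = O\big(\varepsilon^{(3-2\beta)/(2-\beta)} + \varepsilon^{4-2\max(s_h,s_t)}\big)$ — I will track each contribution's power of $\varepsilon$ carefully here, since these are exactly the two rates in the theorem.

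\emph{Step 2: the limiting operator and its invertibility.} Let $\Phi_\varepsilon(V) := V - (-\partial_x^2+1)^{-1}[\gamma V^2 + R_\varepsilon(V)]$, acting on $H^s_e(\mathbb R)$. At $\varepsilon=0$ this is $\Phi_0(V) = V - (-\partial_x^2+1)^{-1}\gamma V^2$, and $\Phi_0(\sigma)=0$ by \eqref{KdV}–\eqref{sigma_definition}. The Fréchet derivative $D\Phi_0(\sigma)h = h - 2\gamma(-\partial_x^2+1)^{-1}(\sigma h) = (-\partial_x^2+1)^{-1}\mathcal L h$ with $\mathcal L = -\partial_x^2 + 1 - 2\gamma\sigma$. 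The key fact — standard from ODE theory of the KdV soliton — is that $\mathcal L$ on $L^2(\mathbb R)$ has kernel spanned by $\sigma'$, which is \emph{odd}; hence on the even subspace $H^s_e$ the operator $\mathcal L$, and therefore $D\Phi_0(\sigma)$, is invertible. (One should also check $\sigma$ itself is not in the even kernel — it isn't, since $\mathcal L\sigma \ne 0$.) This is the structural reason the whole scheme works and why the problem is posed on even functions.

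\emph{Step 3: quantitative implicit function theorem.} Write the equation as a fixed point: $V = \sigma + W$ with $W$ small in $H^s_e$, and rearrange \eqref{plan_rescaled} into $W = -D\Phi_0(\sigma)^{-1}\big[\Phi_\varepsilon(\sigma) + \big(\Phi_\varepsilon(\sigma+W) - \Phi_\varepsilon(\sigma) - D\Phi_0(\sigma)W\big)\big] =: \mathcal A_\varepsilon(W)$. Using boundedness of $(-\partial_x^2+1)^{-1}: H^{s-2}\to H^s$, the algebra property of $H^s$ for $s>1/2$ (here $s\ge 1$), and the bilinear/trilinear bounds from Assumptions \ref{main_assumption}(5)(6) and \ref{T_assumption} (in particular $\|T_\varepsilon\|\lesssim|\varepsilon|^{-s_t}$ to control the $W$-dependence of the trilinear term), one shows $\mathcal A_\varepsilon$ maps a ball $\{\|W\|_{H^s}\le \rho_\varepsilon\}$ into itself and is a contraction there, for $\varepsilon$ small, with $\rho_\varepsilon$ of the size $\|\Phi_\varepsilon(\sigma)\|_{H^s}$ from Step 1 (plus the operator-norm factor, which is $O(1)$). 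Banach fixed point gives a unique $W=W^\varepsilon$ in that ball, hence existence and local uniqueness of $V^\varepsilon=\sigma+W^\varepsilon$, with $\|W^\varepsilon\|_{H^s}=O(\rho_\varepsilon)$, which is precisely the claimed estimate. Undoing the scaling gives the traveling wave $v(x)=\varepsilon^2 V^\varepsilon(\varepsilon x)$, and one recovers $\eta$ from $v$ as indicated before \eqref{solitary_equation}.

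\emph{Main obstacle.} The routine-but-delicate heart of the argument is Step 1: bookkeeping the various error terms after rescaling and extracting the sharp powers of $\varepsilon$. The low-frequency Taylor errors for $M$ feed the term $\varepsilon^{(3-2\beta)/(2-\beta)}$ (the fractional exponent arises from optimizing a high-frequency cutoff at $|\xi|\sim\varepsilon^{-1/(2-\beta)}$ against the growth $\langle\xi\rangle^\beta$ of $M',F',G',H'$ in Assumption \ref{main_assumption}(3) — this is where the peculiar exponent comes from and where I expect to spend the most care), while the genuinely nonlinear corrections and the trilinear term feed $\varepsilon^{4-2\max(s_h,s_t)}$. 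One must also verify uniform-in-$\varepsilon$ boundedness of the rescaled bilinear operator $(f,g)\mapsto \big(G_\varepsilon(f\,H_\varepsilon g)\big)$ on $H^s_e\times H^s_e$, for which the smoothing balance $|G(\xi)|\lesssim\langle\xi\rangle^{-s_h}$, $|H(\xi)|\lesssim\langle\xi\rangle^{s_h}$ with $s_h<2\le 2s$ is exactly what is needed. Everything else — invertibility of $D\Phi_0(\sigma)$ on even functions, and the contraction estimate — is comparatively standard once the scaling analysis is in place.
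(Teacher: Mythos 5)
Your proposal follows essentially the same route as the paper: rescale to the KdV limit, invert the linearization $\mathcal K = 1 - 2\gamma J^{-2}(\sigma\,\cdot)$ on even functions using that $\ker\left(-\partial_x^2+1-2\gamma\sigma\right)$ is spanned by the odd function $\sigma'$, and conclude by the implicit function theorem, with the exponent $(3-2\beta)/(2-\beta)$ arising from exactly the frequency-cutoff optimization you describe. The only bookkeeping difference is that the paper obtains the rate $\varepsilon^{4-2\max(s_h,s_t)}$ by bootstrapping a residual term of size $\varepsilon^{2-\max(s_h,s_t)}\norm{V^{\varepsilon}-\sigma}_{H^s}$ rather than reading it directly off the residual evaluated at $\sigma$, which is what your contraction setup would also produce once the Lipschitz constant of the error term is tracked.
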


\section{Preliminaries}
\setcounter{equation}{0}

We  state an estimate, firstly appeared
in \cite{Klainerman_Selberg} in a weaker form,
and later sharpened in \cite{Selberg_Tesfahun}.

\begin{lemma}
\label{Selberg_Tesfahun_lemma}
	Suppose $a, b, c \in \mathbb R$.
	Then for any $f \in H^a(\mathbb R)$,
	$g \in H^b(\mathbb R)$ and $h \in H^c(\mathbb R)$
	the following inequality holds
	\begin{equation}
	\label{Selberg_Tesfahun}
		\lVert fgh \rVert _{L^1}
		\lesssim
		\lVert f \rVert _{H^a}
		\lVert g \rVert _{H^b}
		\lVert h \rVert _{H^c}
	\end{equation}
	provided that
	\[
		a + b + c > \frac 12
		,
	\]
	\[
		a + b \geqslant 0
		,
		\quad
		a + c \geqslant 0
		,
		\quad
		b + c \geqslant 0
		.
	\]
\end{lemma}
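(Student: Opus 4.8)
My plan is to prove the estimate on the Fourier side, where it is equivalent, by duality, to the Sobolev product estimate $\|fg\|_{H^{-c}}\lesssim\|f\|_{H^a}\|g\|_{H^b}$ under the very same hypotheses. By density I may take $f,g,h$ Schwartz and estimate the trilinear integral $\int_{\mathbb R}fgh\,dx$. Setting $F(\xi)=\langle\xi\rangle^{a}|\widehat f(\xi)|$, $G(\xi)=\langle\xi\rangle^{b}|\widehat g(\xi)|$ and $H(\xi)=\langle\xi\rangle^{c}|\widehat h(\xi)|$, so that $\|F\|_{L^2}=\|f\|_{H^a}$ and likewise for $G,H$, Plancherel together with the convolution identity for products reduces matters (with $\xi_3=-\xi_1-\xi_2$) to the weighted trilinear bound
\[
\iint \frac{F(\xi_1)\,G(\xi_2)\,H(\xi_3)}{\langle\xi_1\rangle^{a}\langle\xi_2\rangle^{b}\langle\xi_3\rangle^{c}}\,\lesssim\,\|F\|_{L^2}\,\|G\|_{L^2}\,\|H\|_{L^2},
\]
the integration running over the plane $\xi_1+\xi_2+\xi_3=0$ with $F,G,H\ge0$.

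The core is a frequency-region decomposition of that plane according to which of $\langle\xi_1\rangle,\langle\xi_2\rangle,\langle\xi_3\rangle$ is largest; because the three frequencies sum to zero, on each region the two largest are comparable. On the region where $\langle\xi_3\rangle$ dominates one has $\langle\xi_3\rangle\sim\max(\langle\xi_1\rangle,\langle\xi_2\rangle)$, so the weight $\langle\xi_3\rangle^{-c}$ is harmless when $c\ge0$ and, when $c<0$, is redistributed through $\langle\xi_3\rangle^{-c}\lesssim\langle\xi_1\rangle^{-c}+\langle\xi_2\rangle^{-c}$; the pairwise hypotheses $a+c\ge0$ and $b+c\ge0$ then guarantee that the exponents surviving on $\xi_1$ and $\xi_2$ stay nonnegative. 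A Cauchy–Schwarz pairing (Schur test) of $H$ against the $\xi_1$-convolution of $F$ and $G$ reduces everything to the classical bracket-convolution estimate
\[
\int_{\mathbb R}\frac{d\xi}{\langle\xi\rangle^{2p}\,\langle\zeta-\xi\rangle^{2q}}\,\lesssim\,\langle\zeta\rangle^{-2\min\left(p,\,q,\,p+q-\frac12\right)},\qquad p,q\ge0,\ \ p+q>\tfrac12,
\]
and it is precisely the requirement $p+q>1/2$ that consumes the hypothesis $a+b+c>1/2$. The regions where $\langle\xi_1\rangle$ or $\langle\xi_2\rangle$ dominates are identical after relabeling, now invoking $a+b\ge0$.

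I expect the genuine difficulty to be exactly the region in which the dominant frequency carries the negative index (the factor $\langle\xi_3\rangle^{-c}$ with $c<0$): there one must spend the surplus smoothness of the remaining two factors to absorb a growing weight, and it is the three pairwise conditions together with $a+b+c>1/2$ that make this bookkeeping close, the bound being false once any pairwise sum drops below zero and logarithmically divergent at $a+b+c=\tfrac12$. When all three indices are nonnegative the statement as written admits a short proof bypassing Fourier analysis: by Hölder's inequality and the Sobolev embeddings $H^{s}(\mathbb R)\hookrightarrow L^{p}$ with $1/p=\max(0,\tfrac12-s)$, a choice of $p_1,p_2,p_3$ with $1/p_1+1/p_2+1/p_3=1$ is admissible exactly because $3/2-(a+b+c)<1$, and this controls $\|fgh\|_{L^1}$ directly. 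I would nonetheless present the Fourier argument as the main line, since it alone covers the full admissible range of indices.
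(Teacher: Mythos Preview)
The paper does not supply its own proof of this lemma at all: it is stated with a citation to Klainerman--Selberg and Selberg--Tesfahun and then used as a black box. Your sketch is precisely the standard argument from that literature---pass to the Fourier side, reduce to the weighted trilinear convolution estimate on the plane $\xi_1+\xi_2+\xi_3=0$, split according to the dominant frequency (using that the two largest are comparable since the three sum to zero), redistribute any negative index onto the other two via the pairwise hypotheses, and close with Cauchy--Schwarz and the elementary bracket-convolution bound, where the strict inequality $a+b+c>1/2$ is what makes the resulting one-dimensional integral finite. So there is nothing to compare: you have reconstructed the proof the paper chose to omit.

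One small remark: as written the lemma asserts a bound on $\lVert fgh\rVert_{L^1}$, but your argument (and the cited references, and every application later in the paper, e.g.\ the proof of Corollary~\ref{Selberg_Tesfahun_corollary_2}) actually controls the trilinear form $\bigl|\int fgh\bigr|$, equivalently the dual product estimate $\lVert fg\rVert_{H^{-c}}\lesssim\lVert f\rVert_{H^a}\lVert g\rVert_{H^b}$. When some index is negative the pointwise product $fgh$ need not be a function at all, so the $L^1$ formulation should be read in this dual sense. This is a wrinkle in the paper's phrasing rather than in your proof.
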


An immediate corollary of this Lemma
and duality argument is the following.

\begin{corollary}
\label{Selberg_Tesfahun_corollary_1}
	Let $s \geqslant 1$ and $f \in H^s(\mathbb R)$.
	Then
	\(
		A_f = J^{-2} f J^2
	\)
	is a bounded linear operator on $H^s(\mathbb R)$
	with the norm
	\(
		\norm{ A_f } \lesssim \norm{f} _{H^s}
		.
	\)
\end{corollary}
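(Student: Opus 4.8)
The plan is to deduce this from Lemma \ref{Selberg_Tesfahun_lemma} by a duality pairing. The operator $A_f = J^{-2} f J^2$ acts on $g \in H^s(\mathbb R)$ by first computing $J^2 g \in H^{s-2}$, multiplying by $f$, and then applying the smoothing operator $J^{-2}$. To estimate $\norm{A_f g}_{H^s} = \norm{J^{s-2} (f J^2 g)}_{L^2}$, I would test against an arbitrary $h \in H^{-(s-2)} = H^{2-s}$ and use
\[
	\bigabs{ \innerprod{ J^{s-2}(f J^2 g) }{ h } }
	=
	\bigabs{ \innerprod{ f \, J^2 g }{ J^{s-2} h } }
	\leqslant
	\norm{ f \cdot (J^2 g) \cdot (J^{s-2} h) }_{L^1}
	.
\]
Then Lemma \ref{Selberg_Tesfahun_lemma} applies with $a = s$ (for $f$), $b = s - 2$ (for $J^2 g$, noting $\norm{J^2 g}_{H^{s-2}} = \norm{g}_{H^s}$), and $c = 2 - s$ (for $J^{s-2} h$, noting $\norm{J^{s-2} h}_{H^{2-s}} = \norm{h}_{H^0} = \norm{h}_{L^2}$).

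Next I would verify the hypotheses of the Lemma for these exponents. The sum condition is $a + b + c = s + (s-2) + (2-s) = s > \frac12$, which holds since $s \geqslant 1$. The pairwise conditions are $a + b = 2s - 2 \geqslant 0$ (true since $s \geqslant 1$), $a + c = 2 > 0$, and $b + c = 0 \geqslant 0$. So all conditions are satisfied, giving
\[
	\bigabs{ \innerprod{ J^{s-2}(f J^2 g) }{ h } }
	\lesssim
	\norm{f}_{H^s} \norm{g}_{H^s} \norm{h}_{L^2}
	.
\]
Taking the supremum over $h$ with $\norm{h}_{L^2} \leqslant 1$ yields $\norm{A_f g}_{H^s} \lesssim \norm{f}_{H^s} \norm{g}_{H^s}$, which is the claim. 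Linearity of $A_f$ in its argument $g$ is immediate from the definition.

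There is essentially no serious obstacle here; the only point requiring a little care is bookkeeping the Bessel-potential weights so that the three factors land in Sobolev spaces whose exponents satisfy the constraints of Lemma \ref{Selberg_Tesfahun_lemma}, and in particular checking that the borderline equality $b + c = 0$ is permitted (it is, since the Lemma only requires $\geqslant 0$ for the pairwise sums). One should also note that evenness plays no role in this corollary, so it holds on $H^s(\mathbb R)$ and restricts to $H^s_e(\mathbb R)$ when $f$ is even. The implied constant depends only on $s$.
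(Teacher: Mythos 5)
Your proof is correct and is exactly the "duality argument" the paper merely asserts without detail: you pair $J^{s-2}(fJ^2g)$ against $h$ and apply Lemma \ref{Selberg_Tesfahun_lemma} with exponents $(s, s-2, 2-s)$, and your verification of the hypotheses (including the borderline $b+c=0$) is accurate. The only cosmetic slip is introducing $h$ as an element of $H^{2-s}$ when you are dualizing an $L^2$ norm and in fact take the supremum over $\norm{h}_{L^2}\leqslant 1$, which is what the computation actually uses.
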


\begin{corollary}
\label{Selberg_Tesfahun_corollary_2}
	Given Assumption \ref{main_assumption}
	and $s > 1/2$ such that $s \geqslant s_h/2$,
	we have the following
	\[
		\norm{ G_{\varepsilon} ( v H_{\varepsilon} w ) }_{H^s}
		\lesssim
		|\varepsilon|^{-s_h}
		\norm{v}_{H^s} \norm{w}_{H^s}
		,
	\]
	where $0 < |\varepsilon| \leqslant 1$ and
	$v, w \in H^s(\mathbb R)$.
\end{corollary}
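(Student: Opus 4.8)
The plan is to reduce the claimed bound to a direct application of Lemma \ref{Selberg_Tesfahun_lemma}, handling the rescaled multipliers $G_\varepsilon$ and $H_\varepsilon$ by their symbol estimates from Assumption \ref{main_assumption}, item (6). First I would record the effect of rescaling on Sobolev norms: for $0<|\varepsilon|\leqslant 1$ and $s\geqslant 0$ one has $\norm{f_\varepsilon}_{H^s}\lesssim |\varepsilon|^{-1/2}\norm{f}_{H^{s}}$ is \emph{not} quite what we want — rather the natural scaling identity $\norm{f_\varepsilon}_{\dot H^s}=|\varepsilon|^{s-1/2}\norm{f}_{\dot H^s}$, and more usefully that $G_\varepsilon=G(\varepsilon D)$ has symbol $G(\varepsilon\xi)$ which, by the bound $|G(\xi)|\lesssim\langle\xi\rangle^{-s_h}$, satisfies $|G(\varepsilon\xi)|\lesssim\langle\varepsilon\xi\rangle^{-s_h}\lesssim |\varepsilon|^{-s_h}\langle\xi\rangle^{-s_h}$ for $0<|\varepsilon|\leqslant 1$ (since $\langle\varepsilon\xi\rangle\gtrsim|\varepsilon|\langle\xi\rangle$ in that range). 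Similarly $|H(\varepsilon\xi)|\lesssim|\varepsilon|^{s_h}\langle\xi\rangle^{s_h}$ would be the wrong direction; instead I use $|H(\varepsilon\xi)|\lesssim\langle\varepsilon\xi\rangle^{s_h}\lesssim\langle\xi\rangle^{s_h}$ for $|\varepsilon|\leqslant 1$. So the gain of $|\varepsilon|^{-s_h}$ must come entirely from $G_\varepsilon$, while $H_\varepsilon$ is bounded from $H^s$ to $H^{s-s_h}$ uniformly in $\varepsilon$.

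Next I would estimate as follows. Write $G_\varepsilon(vH_\varepsilon w)$ and pair with an arbitrary $\phi\in H^{-s}(\mathbb R)$. By Plancherel and the symbol bound on $G$, $\biginnerprod{G_\varepsilon(vH_\varepsilon w)}{J^{2s}\phi}$ is controlled by $|\varepsilon|^{-s_h}$ times $\norm{J^{-s_h}(vH_\varepsilon w)\cdot J^{2s}\phi}_{L^1}$-type quantity; more cleanly, $\norm{G_\varepsilon(vH_\varepsilon w)}_{H^s}\lesssim |\varepsilon|^{-s_h}\norm{v H_\varepsilon w}_{H^{s-s_h}}$. It remains to bound $\norm{vH_\varepsilon w}_{H^{s-s_h}}$ by $\norm{v}_{H^s}\norm{w}_{H^s}$. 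I would do this by duality: test against $\phi\in H^{-(s-s_h)}=H^{s_h-s}$, so the task is $\norm{v\,(H_\varepsilon w)\,\phi}_{L^1}\lesssim\norm{v}_{H^s}\norm{w}_{H^s}\norm{\phi}_{H^{s_h-s}}$. Since $H_\varepsilon$ maps $H^s\to H^{s-s_h}$ boundedly (uniformly in $\varepsilon$, by the estimate above), it suffices to show $\norm{fgh}_{L^1}\lesssim\norm{f}_{H^s}\norm{g}_{H^{s-s_h}}\norm{h}_{H^{s_h-s}}$, i.e. Lemma \ref{Selberg_Tesfahun_lemma} with $a=s$, $b=s-s_h$, $c=s_h-s$. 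The pairwise sums are $a+b=2s-s_h\geqslant 0$ (this is exactly the hypothesis $s\geqslant s_h/2$), $a+c=s_h\geqslant 0$, $b+c=0\geqslant 0$, and the total $a+b+c=s>1/2$. So all hypotheses of Lemma \ref{Selberg_Tesfahun_lemma} are met and the inequality follows.

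The only genuine subtlety — and the step I expect to need the most care — is the direction of the rescaling estimates on the symbols: one must check that the low-frequency behaviour of $\langle\varepsilon\xi\rangle$ relative to $\langle\xi\rangle$ produces a \emph{negative} power of $|\varepsilon|$ from $G_\varepsilon$ and only a harmless \emph{bounded} contribution from $H_\varepsilon$, rather than the reverse. Concretely, for $0<|\varepsilon|\leqslant1$ and all $\xi\in\mathbb R$ one has $|\varepsilon|\,\langle\xi\rangle\leqslant\langle\varepsilon\xi\rangle\leqslant\langle\xi\rangle$, whence $\langle\varepsilon\xi\rangle^{-s_h}\leqslant|\varepsilon|^{-s_h}\langle\xi\rangle^{-s_h}$ and $\langle\varepsilon\xi\rangle^{s_h}\leqslant\langle\xi\rangle^{s_h}$, using $s_h\geqslant0$. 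Everything else is a routine combination of Plancherel, duality, and Lemma \ref{Selberg_Tesfahun_lemma}, with the exponent bookkeeping matching the stated hypotheses $s>1/2$ and $s\geqslant s_h/2$ precisely.
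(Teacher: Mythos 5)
Your proposal is correct and follows essentially the same route as the paper: duality in $H^s$, the scaling inequalities $|\varepsilon|\langle\xi\rangle\leqslant\langle\varepsilon\xi\rangle\leqslant\langle\xi\rangle$ to place $H_\varepsilon w$ in $H^{s-s_h}$ uniformly and to extract the factor $|\varepsilon|^{-s_h}$ from $G_\varepsilon$, and Lemma \ref{Selberg_Tesfahun_lemma} with exponents $(s,\,s-s_h,\,s_h-s)$. The only cosmetic difference is that the paper pulls the $|\varepsilon|^{-s_h}$ out of $G_\varepsilon u$ acting on the dual test function rather than out of $G_\varepsilon$ applied to the product, which is the same symbol estimate.
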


\begin{proof}
Indeed,
\[
	\norm{ G_{\varepsilon} ( v H_{\varepsilon} w ) }_{H^s}
	=
	\sup_{ \norm{u}_{H^{-s}} \leqslant 1 }
	\left| \int v ( H_{\varepsilon} w ) G_{\varepsilon} u \right|	
	\lesssim
	\sup_{ \norm{u}_{H^{-s}} \leqslant 1 }
	\norm{v}_{H^s}
	\norm{H_{\varepsilon} w}_{H^{s - s_h}}
	\norm{G_{\varepsilon} u}_{H^{s_h - s}}
	,
\]
where
\(
	\norm{H_{\varepsilon} w}_{H^{s - s_h}}
	\lesssim
	\norm{w}_{H^s}
\)
and
\(
	\norm{G_{\varepsilon} u}_{H^{s_h - s}}
	\lesssim
	|\varepsilon|^{-s_h}
	\norm{u}_{H^{-s}}
	,
\)
following from the inequalities
\(
	J(\varepsilon \xi) / J(\xi) \leqslant 1
\)
and
\(
	J(\xi) / J(\varepsilon \xi) \leqslant 1 / |\varepsilon|
\)
holding true for any $\xi \in \mathbb R$.
\end{proof}

\begin{lemma}
\label{main_lemma}
	Let $\varphi$ be a real-valued even function defined on $\mathbb R$
	such that
	\begin{enumerate}
		\item
		it has derivative $\varphi'$
		satisfying
		\(
			|\varphi'(\xi)| \lesssim \langle \xi \rangle ^{\beta}
		\)
		for some $\beta < 1$ and every $\xi \in \mathbb R$;
		\item
		it has a bounded second derivative $\varphi''$
		on some interval $(-\xi_1, \xi_1)$ around zero.
	\end{enumerate}
	Then
	\[
		\norm{ J^{-2} ( \varphi( \varepsilon D ) - \varphi(0) ) f } _{H^s}
		\lesssim
		| \varepsilon |^{ \frac {3 - 2\beta}{2 - \beta} }
		\norm{ f } _{H^s}
	\]
	for any $f \in H^s(\mathbb R)$ provided $|\varepsilon| \leqslant 1$.
\end{lemma}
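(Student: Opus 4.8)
The goal is to bound $\norm{J^{-2}(\varphi(\varepsilon D) - \varphi(0))f}_{H^s}$, i.e. to control the Fourier multiplier with symbol
\[
	m_\varepsilon(\xi) = \frac{\varphi(\varepsilon\xi) - \varphi(0)}{\langle\xi\rangle^2}
\]
uniformly in $\varepsilon$, since $\norm{J^{-2}(\varphi(\varepsilon D)-\varphi(0))f}_{H^s} = \norm{m_\varepsilon(D) J^s f}_{L^2} \le \norm{m_\varepsilon}_{L^\infty} \norm{f}_{H^s}$ (the multiplier commutes with $J^s$ because all symbols are just functions of $\xi$). So the whole task reduces to showing $\norm{m_\varepsilon}_{L^\infty(\R)} \lesssim \varepsilon^{(3-2\beta)/(2-\beta)}$.

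The plan is to split the frequency axis at a threshold $R = R(\varepsilon)$ to be optimized, treating low frequencies $|\xi| \le R$ and high frequencies $|\xi| \ge R$ separately. \textbf{Low frequencies.} For $|\xi| \le R$ with $R$ chosen so that $\varepsilon R \le \xi_1$, we are in the range where $\varphi''$ is bounded; a Taylor expansion gives $\varphi(\varepsilon\xi) - \varphi(0) = \varepsilon\xi\,\varphi'(0) + O(\varepsilon^2\xi^2)$, but since $\varphi$ is even, $\varphi'(0) = 0$, so $|\varphi(\varepsilon\xi) - \varphi(0)| \lesssim \varepsilon^2\xi^2 \lesssim \varepsilon^2\langle\xi\rangle^2$, hence $|m_\varepsilon(\xi)| \lesssim \varepsilon^2$ on this whole range. \textbf{High frequencies.} For $|\xi| \ge R$, I bound the numerator by the mean value theorem together with the growth hypothesis on $\varphi'$: $|\varphi(\varepsilon\xi) - \varphi(0)| \le \varepsilon|\xi| \sup_{|\zeta|\le\varepsilon|\xi|}|\varphi'(\zeta)| \lesssim \varepsilon|\xi|\,\langle\varepsilon\xi\rangle^{\beta} \lesssim \varepsilon|\xi|(1 + \varepsilon|\xi|)^\beta \lesssim \varepsilon|\xi| \cdot \langle\xi\rangle^\beta$ (using $\varepsilon \le 1$ so $\langle\varepsilon\xi\rangle \le \langle\xi\rangle$). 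Dividing by $\langle\xi\rangle^2$ gives $|m_\varepsilon(\xi)| \lesssim \varepsilon\langle\xi\rangle^{\beta-1} \lesssim \varepsilon R^{\beta-1}$ on $|\xi| \ge R$, since $\beta - 1 < 0$ makes $\langle\xi\rangle^{\beta-1}$ decreasing.

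Combining the two estimates, $\norm{m_\varepsilon}_{L^\infty} \lesssim \max(\varepsilon^2 R^2,\, \varepsilon R^{\beta-1})$ — wait, more precisely $\lesssim \varepsilon^2 R^2$ from the low part is not right either since the low bound was $\varepsilon^2$ uniformly; let me restate: low part contributes $\lesssim \varepsilon^2$ when $R \le \xi_1/\varepsilon$, high part contributes $\lesssim \varepsilon R^{\beta-1}$. To balance, one wants $R$ as large as possible in the high estimate, i.e. $R = \xi_1/\varepsilon$, giving high contribution $\lesssim \varepsilon(\xi_1/\varepsilon)^{\beta-1} \simeq \varepsilon^{2-\beta}$, while the low part gives $\varepsilon^2 \le \varepsilon^{2-\beta}$. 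That yields exponent $2-\beta$, which is \emph{better} than the claimed $(3-2\beta)/(2-\beta)$ when $\beta$ is small — so the intended split must instead keep $R$ independent of the $\xi_1/\varepsilon$ cutoff and re-expand. The actual optimization: on the intermediate range one should use the \emph{second-derivative} Taylor bound $|m_\varepsilon| \lesssim \varepsilon^2\langle\xi\rangle^{-2}\cdot\xi^2$... The honest balancing is between $\varepsilon^2 R^{2}/\langle R\rangle^2 \sim \varepsilon^2$ and, using the first-derivative bound down to frequency $R$ with the \emph{Lipschitz}/Taylor bound only valid for $\varepsilon\xi$ small: set the crossover at $\varepsilon\xi \sim 1$, i.e. $\xi \sim 1/\varepsilon$, and for $R \le \xi \le 1/\varepsilon$ interpolate $|\varphi(\varepsilon\xi)-\varphi(0)| \lesssim \min(\varepsilon^2\xi^2, \varepsilon\xi\langle\xi\rangle^\beta)$. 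The crossover between these two numerator bounds occurs at $\varepsilon\xi \sim (\varepsilon\xi)^{?}$... solving $\varepsilon^2\xi^2 = \varepsilon\xi^{1+\beta}$ gives $\xi = \varepsilon^{-1/(1-\beta)}$, and plugging into $\varepsilon^2\xi^2/\xi^2 = \varepsilon^2$ — again $\varepsilon^2$. \textbf{The main obstacle, then, is getting the exponent right}: the exponent $(3-2\beta)/(2-\beta)$ suggests the optimization is \emph{not} over a sharp frequency cutoff but over a genuinely free parameter $R$ where the low bound is $\varepsilon^2 R^{2-?}$... Concretely I expect one writes, for $|\xi| \le R$: $|m_\varepsilon(\xi)| \le \varepsilon^2 \sup|\varphi''| \cdot \xi^2/\langle\xi\rangle^2 \lesssim \varepsilon^2$ — no. Rather the correct reading is that for $\xi \le R$ one uses $|m_\varepsilon| \lesssim \varepsilon^2$ and for $\xi \ge R$ one must \emph{also} respect $\varepsilon\xi$ possibly exceeding $\xi_1$, so $R$ cannot exceed $\xi_1/\varepsilon$, AND the high bound is $\varepsilon\langle\xi\rangle^{\beta-1}$ evaluated at $\xi = R$; but one must additionally pay for the region $\xi_1/\varepsilon \le \xi$ separately where only $\varepsilon\langle\xi\rangle^{\beta-1}$ with $\langle\xi\rangle \gtrsim 1/\varepsilon$ applies, giving $\lesssim \varepsilon \cdot \varepsilon^{1-\beta} = \varepsilon^{2-\beta}$. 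I will carry out the computation with $R$ left free, obtain $\norm{m_\varepsilon}_{L^\infty}\lesssim \varepsilon^2 R^{?} + \varepsilon R^{\beta - 1}$ with the correct power of $R$ on the first term coming from the sharpest available low-frequency estimate, then minimize over $R$; the stated exponent $(3-2\beta)/(2-\beta)$ will drop out of $\frac{d}{dR}=0$, and checking that the optimal $R$ indeed satisfies $\varepsilon R \le \xi_1$ (so the Taylor bound is legitimate) for $\varepsilon$ small is the remaining bookkeeping. I do not anticipate any difficulty beyond this careful balancing; evenness of $\varphi$ (killing $\varphi'(0)$) and the decrease of $\langle\xi\rangle^{\beta-2}$ are the only structural facts used.
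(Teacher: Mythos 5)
Your core computation is correct, and in fact it proves a \emph{stronger} estimate than the lemma claims; the long second half of your proposal, where you search for a way to recover the exponent $\tfrac{3-2\beta}{2-\beta}$ and end without a definitive conclusion, is chasing a phantom. Stop at the point where you took $R=\xi_1/|\varepsilon|$: the low-frequency region $|\xi|\leqslant \xi_1/|\varepsilon|$ gives $|m_\varepsilon(\xi)|\leqslant \norm{\varphi''}_{L^\infty(-\xi_1,\xi_1)}\,\varepsilon^2\xi^2/\langle\xi\rangle^2\lesssim\varepsilon^2$ (using $\varphi'(0)=0$ from evenness), the high-frequency region gives $|m_\varepsilon(\xi)|\lesssim|\varepsilon|\,\langle\xi\rangle^{\beta-1}\lesssim|\varepsilon|^{2-\beta}$ (for $0\leqslant\beta<1$; for $\beta<0$ one gets $\varepsilon^2$ directly since then $\langle\varepsilon\xi\rangle^\beta\leqslant 1$ rather than $\leqslant\langle\xi\rangle^\beta$ — a small case you should note), and since $\tfrac{3-2\beta}{2-\beta}=2-\beta-\tfrac{(1-\beta)^2}{2-\beta}\leqslant\min(2,\,2-\beta)$, your bound $|\varepsilon|^{2-\max(\beta,0)}$ implies the stated one for $|\varepsilon|\leqslant 1$. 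The reason you cannot reproduce the paper's exponent is that the paper's own proof is lossier at low frequencies: it writes $\varphi(\varepsilon\xi)-\varphi(0)=\varepsilon\xi\,\varphi'(\theta_{\varepsilon\xi})$ and then pulls out the \emph{uniform} bound $\sup_{|\zeta|\leqslant\varepsilon R}|\varphi'(\zeta)|\leqslant\varepsilon R\norm{\varphi''}_{L^\infty}$ over the whole ball $|\xi|\leqslant R$, discarding the pointwise cancellation $\varepsilon^2\xi^2/\langle\xi\rangle^2\leqslant\varepsilon^2$ that you keep. This forces the paper to balance $\varepsilon R$ against the high-frequency contribution $R^{\beta-1}$, and the optimization $R=\varepsilon^{-1/(2-\beta)}\xi_1$ is what produces $\tfrac{3-2\beta}{2-\beta}$ (after restoring the factor $\varepsilon$ that the paper divides out at the start). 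So: same frequency-splitting strategy, but your sharper low-frequency estimate removes the need for any optimization over $R$. Delete everything after your first combination of the two estimates, add the one-line comparison of exponents, and the proof is complete.
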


\begin{proof}
Let $R > 0$ be large to be chosen below
dependent on $\varepsilon$.
One can assume that $\varepsilon > 0$ in addition.
We calculate the square of norm by integrating separately
over the ball $B_R = (-R, R)$
and its complement $B_R^c$ as follows
\begin{multline*}
	N_f^2 =	
	\norm{
		J^{-2} \frac{ \varphi( \varepsilon D ) - \varphi(0) }
		{\varepsilon} f
	} _{H^s}^2
	=
	\int \left| \varphi'(\theta_{\varepsilon \xi}) \right|^2
	\left| \widehat{f}(\xi) \right|^2
	\langle \xi \rangle ^{2s - 4} |\xi|^2 d\xi
	\\
	\leqslant
	\sup _{\varepsilon B_R} \left| \varphi' \right|^2
	\int _{B_R} \left| \widehat{f}(\xi) \right|^2
	\langle \xi \rangle ^{2s - 2} d\xi
	+
	C \int _{B_R^c} \langle \theta_{\varepsilon \xi} \rangle ^{2\beta}
	\left| \widehat{f}(\xi) \right|^2
	\langle \xi \rangle ^{2s - 2} d\xi
	.
\end{multline*}
Clearly, one can take $R$ in a way so that
\(
	\varepsilon R \leqslant \xi_1
	,
\)
and so
\[
	\sup _{\varepsilon B_R} \left| \varphi' \right|
	=
	\sup _{|\xi| \leqslant \varepsilon R}
	\left| \varphi'(\xi) - \varphi'(0) \right|
	\leqslant
	\varepsilon R \norm{\varphi''} _{L^{\infty}(-\xi_1, \xi_1)}
	.
\]
To estimate the second integral notice that for
$|\xi| \geqslant R$ we have
\[
	\langle \theta_{\varepsilon \xi} \rangle ^{2\beta}
	\langle \xi \rangle ^{-2}
	\leqslant
	\langle \varepsilon \xi \rangle ^{2\beta}
	\langle \xi \rangle ^{-2}
	\leqslant
	\langle \xi \rangle ^{2\beta - 2}
	\leqslant
	R ^{2\beta - 2}
	.
\]
Thus
\(
	N_f \lesssim
	\left( \varepsilon R + R^{\beta - 1} \right)
	\norm{f}_{H^s}
	,
\)
and so it is left to assign
\(
	R = \varepsilon^{-1 / (2 - \beta)} \xi_1
\)
in order to conclude with the proof.
\end{proof}
This Lemma is used below to approximate the operators
$F_{\varepsilon}, G_{\varepsilon}$
and $H_{\varepsilon}$,
whereas the operator $M_{\varepsilon}$ is approximated
with the help of the following two results.
\begin{corollary}
\label{main_corollary}
	Given Assumption \ref{main_assumption},
	for any $f \in H^s(\mathbb R)$ and
	$|\varepsilon| \leqslant 1$ it holds true that
	\[
		\norm{
			J^{-2}
			\left(
				\frac 1{ \omega_{\varepsilon} + M_{\varepsilon} }
				-
				\frac 1{2M_0}
			\right) f
		} _{H^s}
		\lesssim
		| \varepsilon |^{ \frac {3 - 2\beta}{2 - \beta} }
		\norm{ f } _{H^s}
	\]
\end{corollary}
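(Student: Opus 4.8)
The plan is to peel off the problematic denominator and reduce everything else to Lemma~\ref{main_lemma}. Write $M_0 = M(0)$ and recall $\omega_\varepsilon = M_0 - \tfrac12 M''(0)\varepsilon^2$. All operators in sight are commuting Fourier multipliers, so I would first factor at the level of symbols,
\[
	\frac{1}{\omega_\varepsilon + M(\varepsilon\xi)} - \frac{1}{2M_0}
	=
	\frac{1}{2M_0}\cdot\frac{1}{\omega_\varepsilon + M(\varepsilon\xi)}\cdot
	\bigl( 2M_0 - \omega_\varepsilon - M(\varepsilon\xi) \bigr),
\]
and note that the last factor equals exactly $M(0) - M(\varepsilon\xi) + \tfrac12 M''(0)\varepsilon^2$. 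Correspondingly, $J^{-2}\bigl( \tfrac{1}{\omega_\varepsilon + M_\varepsilon} - \tfrac{1}{2M_0}\bigr)$ is the composition of the bounded multiplier $\tfrac{1}{2M_0}(\omega_\varepsilon + M_\varepsilon)^{-1}$ with the operator $J^{-2}(2M_0 - \omega_\varepsilon - M_\varepsilon)$.

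The first thing to verify is that $(\omega_\varepsilon + M_\varepsilon)^{-1}$ is bounded on $H^s$ with norm $\le 1/m_3$, uniformly for $|\varepsilon|\le 1$. Here $m_2<0$ forces $M''(0)\le m_2<0$, hence $-\tfrac12 M''(0)\varepsilon^2\ge 0$ and $\omega_\varepsilon\ge M(0)$; combined with Assumption~\ref{main_assumption}(2) this gives $\omega_\varepsilon + M(\varepsilon\xi)\ge M(0) + M(\varepsilon\xi)\ge m_3>0$ for every $\xi$. So the symbol $1/(\omega_\varepsilon + M(\varepsilon\xi))$ is positive and bounded by $1/m_3$, and the claimed operator bound follows because the $H^s$ operator norm of a Fourier multiplier equals the $L^\infty$ norm of its symbol.

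It then remains to control $\norm{J^{-2}(2M_0 - \omega_\varepsilon - M_\varepsilon)f}_{H^s}$. Since $2M_0 - \omega_\varepsilon - M_\varepsilon = (M_0 - M_\varepsilon) + \tfrac12 M''(0)\varepsilon^2$, I would bound it by
\[
	\norm{J^{-2}(M_\varepsilon - M_0)f}_{H^s} + \tfrac12|M''(0)|\,\varepsilon^2\,\norm{f}_{H^s}.
\]
To the first term I apply Lemma~\ref{main_lemma} with $\varphi = M$: Assumption~\ref{main_assumption}(1) supplies a bounded $M''$ on $(-\xi_1,\xi_1)$ and Assumption~\ref{main_assumption}(3) supplies $|M'(\xi)|\lesssim\langle\xi\rangle^{\beta}$ with $\beta<1$, so the lemma gives a bound $\lesssim\varepsilon^{\frac{3-2\beta}{2-\beta}}\norm{f}_{H^s}$. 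For the second term one uses $\tfrac{3-2\beta}{2-\beta}<2$, which forces $\varepsilon^2\le\varepsilon^{\frac{3-2\beta}{2-\beta}}$ for $0<\varepsilon\le 1$. Multiplying through by the factor $\tfrac{1}{2M_0 m_3}$ from the previous paragraph then yields the assertion.

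The only point that needs genuine care — the \emph{main obstacle} here — is the lower bound on the denominator. One should resist the temptation to feed $\varphi = 1/M$ directly into Lemma~\ref{main_lemma}: Assumption~\ref{main_assumption} only controls $M(\xi) + M(0)$ from below and not $M(\xi)$ itself, so $1/M$ may fail even to be well defined, let alone to satisfy the derivative hypotheses of that lemma. Retaining $(\omega_\varepsilon + M_\varepsilon)^{-1}$ as a separate bounded multiplier, and exploiting the strict negativity of $M''(0)$ to push $\omega_\varepsilon$ above $M(0)$, is precisely what makes the estimate close.
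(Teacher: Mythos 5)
Your proof is correct and follows essentially the same route as the paper: decompose the symbol difference into the $\tfrac12 M''(0)\varepsilon^2$ term plus $M_0 - M(\varepsilon\xi)$, bound the denominator below via $m_3$ (using $\omega_\varepsilon \geqslant M(0)$), and invoke Lemma~\ref{main_lemma} for the $M_\varepsilon - M_0$ part. Your writeup merely makes explicit the denominator lower bound and the comparison $\varepsilon^2 \leqslant \varepsilon^{\frac{3-2\beta}{2-\beta}}$, both of which the paper leaves implicit.
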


\begin{proof}
For any $\xi \in \mathbb R$ we have
\[
	\left|
		\frac 1{ \omega_{\varepsilon} + M(\varepsilon \xi) }
		-
		\frac 1{2M_0}
	\right|
	=
	\left|
		\frac
		{ \omega_{\varepsilon} - M_0 + M(\varepsilon \xi) - M_0 }
		{ 2M_0 ( \omega_{\varepsilon} + M(\varepsilon \xi) ) }
	\right|
	\leqslant
	\frac 1{2|M_0 m_3|}
	\left(
		\frac 12 |M''(0)| \varepsilon^2
		+ | M(\varepsilon \xi) - M_0 |
	\right)
	,
\]
which after implementation of Lemma \ref{main_lemma}
finishes the proof.
\end{proof}

\begin{lemma}
\label{Stefanov_Wright_lemma_1}
	Let $M$ satisfy
	Assumption \ref{main_assumption}.
	Then there is $C > 0$ such that for any $\varepsilon \neq 0$
	it holds that
	\[
		\sup _{ \xi \in \mathbb R }
		\left|
			\frac
			{ \varepsilon^2 }
			{ M(0) - \frac 12 M''(0) \varepsilon^2 - M( \varepsilon \xi ) }
			+
			\frac 2{ M''(0) ( 1 + \xi^2 ) }
		\right|
		\leqslant
		C \varepsilon^2
		.
	\]
\end{lemma}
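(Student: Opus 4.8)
The plan is to bound the supremum over $\xi$ by treating separately the long-wave region $|\varepsilon\xi| \le \xi_1$ and the short-wave region $|\varepsilon\xi| > \xi_1$, exploiting that the Taylor error of $M$ at the origin is quartic in $\varepsilon\xi$ whereas the model denominator grows only quadratically in $\xi$.

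First I would record the elementary consequences of Assumption \ref{main_assumption}. Since $M$ is even and $C^{3,1}$ near $0$, one has $M'(0) = M'''(0) = 0$; writing $L$ for the Lipschitz constant of $M'''$ on $[-\xi_1,\xi_1]$, this gives $|M'''(\tau)| \le L|\tau|$ there, and the integral form of Taylor's theorem yields, for $|\eta| \le \xi_1$,
\[
	E(\eta) := M(\eta) - M(0) - \tfrac12 M''(0)\eta^2 = \tfrac12 \int_0^\eta (\eta-\tau)^2 M'''(\tau)\,d\tau, \qquad |E(\eta)| \le \tfrac{L}{24}|\eta|^4 .
\]
Concavity on $[-\xi_1,\xi_1]$, where $M'' \le m_2 < 0$, together with $M'(0) = 0$ gives $M(0) - M(\eta) \ge \tfrac{|m_2|}{2}\eta^2$ for $|\eta| \le \xi_1$, and $M''(0) \le m_2$ gives $|M''(0)| \ge |m_2|$. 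Throughout, the denominator in the statement is $\omega_\varepsilon - M(\varepsilon\xi)$ with $\omega_\varepsilon$ as in \eqref{omega_definition}.

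In the long-wave region, putting $\eta = \varepsilon\xi$ gives $\omega_\varepsilon - M(\varepsilon\xi) = -\tfrac12 M''(0)\varepsilon^2(1+\xi^2) - E(\varepsilon\xi)$, hence
\[
	\frac{\varepsilon^2}{\omega_\varepsilon - M(\varepsilon\xi)} + \frac{2}{M''(0)(1+\xi^2)} = \frac{E(\varepsilon\xi)}{\bigl(\omega_\varepsilon - M(\varepsilon\xi)\bigr)\bigl(-\tfrac12 M''(0)(1+\xi^2)\bigr)} .
\]
Adding $M(0) - M(\varepsilon\xi) \ge \tfrac{|m_2|}{2}\varepsilon^2\xi^2$ and $-\tfrac12 M''(0)\varepsilon^2 \ge \tfrac{|m_2|}{2}\varepsilon^2$ bounds the first factor below by $\tfrac{|m_2|}{2}\varepsilon^2(1+\xi^2)$; the second factor equals $\tfrac{|M''(0)|}{2}(1+\xi^2) \ge \tfrac{|m_2|}{2}(1+\xi^2)$. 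Since $|E(\varepsilon\xi)| \le \tfrac{L}{24}\varepsilon^4\xi^4 \le \tfrac{L}{24}\varepsilon^4(1+\xi^2)^2$, the right-hand side is at most $\tfrac{L}{6|m_2|\,|M''(0)|}\,\varepsilon^2$.

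In the short-wave region, $|\varepsilon\xi| > \xi_1$ forces $M(\varepsilon\xi) \le m_1 < M(0)$ by \eqref{M_assumption_inequality_1}; since also $-\tfrac12 M''(0)\varepsilon^2 > 0$, the denominator satisfies $\omega_\varepsilon - M(\varepsilon\xi) \ge M(0) - m_1 > 0$, so the first term is at most $\varepsilon^2/(M(0)-m_1)$ in absolute value, while $\xi^2 > \xi_1^2/\varepsilon^2$ makes the second term at most $2\varepsilon^2/(|M''(0)|\xi_1^2)$. Taking $C$ to be the maximum of the three constants obtained completes the argument, uniformly over $\varepsilon \ne 0$. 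I expect the main obstacle to be the long-wave estimate: the point is to see that $E(\varepsilon\xi)$, although of order $\xi^4$ at fixed $\varepsilon$, is absorbed by the product of the two quadratically growing denominators, which is precisely where the $C^{3,1}$ hypothesis and the vanishing of $M'(0)$ and $M'''(0)$ are used.
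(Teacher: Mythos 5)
Your proof is correct and complete. Note that the paper itself does not prove this lemma at all---it simply defers to \cite{Stefanov_Wright2018}---so your argument actually fills in what the paper leaves implicit; the two-region splitting at $|\varepsilon\xi|=\xi_1$, with the quartic Taylor remainder (using $M'(0)=M'''(0)=0$ from evenness and the $C^{3,1}$ hypothesis) absorbed by the product of the two quadratically growing denominators in the long-wave region, and the spectral-gap bound $M(0)-m_1>0$ in the short-wave region, is precisely the standard Stefanov--Wright argument and uses each item of Assumption \ref{main_assumption}(\ref{M_assumption_1}) where it is needed.
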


A proof is given in \cite{Stefanov_Wright2018}.
This lemma implies that there exists the inverse bounded operator
\begin{equation}
\label{Stefanov_Wright_inverse}
	\varepsilon^2 \left( \omega_{\varepsilon} - M_{\varepsilon} \right)^{-1}
	=
	- \frac 2{ M''(0) } \left( 1 - \partial_x^2 \right)^{-1}
	+ O_{\mathcal B(H^s)}(\varepsilon^2)
\end{equation}
as $\varepsilon \to 0$.
We provide the following lemma with a complete proof,
though the idea can be found
in \cite{Friesecke_Pego1999, Stefanov_Wright2018},
for example.

\begin{lemma}
\label{Stefanov_Wright_lemma_2}
	Let $s \geqslant 0$, $\gamma \ne 0$
	and $\sigma(x)$ be defined by \eqref{sigma_definition}.
	Then the operator
	\begin{equation}
	\label{K_definition}
		\mathcal K
		=
		1 - 2\gamma J^{-2} (\sigma \cdot)
	\end{equation}
	is bounded and has a bounded inverse
	in $H_e^s(\mathbb R)$.
\end{lemma}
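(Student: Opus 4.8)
The plan is to show $\mathcal K$ is a Fredholm operator of index zero on $H_e^s(\mathbb R)$ and then verify injectivity, so that invertibility follows. First I would establish that $2\gamma J^{-2}(\sigma\,\cdot)$ is a compact operator on $H_e^s(\mathbb R)$: since $\sigma \in H^r(\mathbb R)$ for every $r$ (it decays exponentially and is smooth), Corollary~\ref{Selberg_Tesfahun_corollary_1} — or a direct variant of Lemma~\ref{Selberg_Tesfahun_lemma} — shows multiplication by $\sigma$ followed by $J^{-2}$ maps $H^s$ boundedly into $H^{s}$, and in fact into $H^{s+\delta}$ for some small $\delta>0$ by exploiting the extra decay of $\widehat\sigma$; the embedding $H^{s+\delta}_{\mathrm{loc}} \hookrightarrow H^s$ together with the exponential spatial decay of $\sigma$ gives compactness via a standard Rellich-type argument (cut off in space, use $\|\sigma\|_{L^\infty(|x|>N)}\to 0$ for the tail). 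Hence $\mathcal K = 1 - (\text{compact})$ is Fredholm of index zero, and it suffices to prove $\ker \mathcal K = \{0\}$ in $H_e^s(\mathbb R)$.

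For injectivity, suppose $u \in H_e^s(\mathbb R)$ with $\mathcal K u = 0$, i.e. $u = 2\gamma J^{-2}(\sigma u)$, equivalently $(1-\partial_x^2)u = 2\gamma \sigma u$, that is
\[
	-u'' + u - 2\gamma\sigma u = 0.
\]
By elliptic regularity $u$ is smooth and, being in $H^s$ with $s\geqslant 0$ and satisfying this ODE with exponentially decaying potential, $u$ decays exponentially at $\pm\infty$. Now I would recognize this as the linearization of the KdV travelling-wave equation \eqref{KdV} about $\sigma$: differentiating $-\sigma'' + \sigma - \gamma\sigma^2 = 0$ in $x$ gives $-\sigma''' + \sigma' - 2\gamma\sigma\sigma' = 0$, so $\sigma'$ is an odd solution of the same second-order ODE. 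Since the ODE is a second-order linear equation, its solution space is two-dimensional, spanned by $\sigma'$ (odd, decaying) and a second, linearly independent solution which — by the Wronskian being constant and nonzero — grows exponentially and is even. Therefore the only $L^2$ solutions are multiples of $\sigma'$, which is odd; intersecting with the even subspace forces $u \equiv 0$. Hence $\mathcal K$ is injective on $H_e^s(\mathbb R)$, and being Fredholm of index zero it is invertible, with bounded inverse by the open mapping theorem.

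The main obstacle I anticipate is the compactness step: one must be careful that $J^{-2}(\sigma\,\cdot)$ is genuinely compact on $H^s$ and not merely bounded, since multiplication by a Schwartz function is bounded on $H^s$ but compactness needs both the smoothing from $J^{-2}$ \emph{and} the spatial decay of $\sigma$ working together — a clean way is to write $\sigma = \sigma\chi_N + \sigma(1-\chi_N)$ with $\chi_N$ a smooth cutoff to $[-N,N]$, note $J^{-2}(\sigma\chi_N\,\cdot)$ is compact (image lands in $H^{s+1}$ supported near a compact set, Rellich), and $\|J^{-2}(\sigma(1-\chi_N)\,\cdot)\|_{\mathcal B(H^s)} \lesssim \|\sigma(1-\chi_N)\|_{W^{1,\infty}} \to 0$ as $N\to\infty$, so $J^{-2}(\sigma\,\cdot)$ is a norm-limit of compact operators, hence compact. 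The ODE analysis in the injectivity step is then routine, the only point requiring the restriction to even functions being precisely that the decaying kernel element $\sigma'$ is odd.
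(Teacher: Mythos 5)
Your proposal is correct, and its core is identical to the paper's argument: show $\mathcal K - 1$ is compact, invoke the Fredholm alternative, convert $\mathcal K u = 0$ into the Schr\"odinger equation $-u'' + u - 2\gamma\sigma u = 0$, and use the fact that the decaying kernel of that operator is spanned by the odd function $\sigma'$, which cannot live in $H^s_e$. The only genuine difference is the mechanism for compactness. The paper observes that $\langle\xi\rangle^{-2}\mathcal F(\sigma\,\cdot)$ is Hilbert--Schmidt from $L^2(dx)$ into $L^2(\langle\xi\rangle^{2s}d\xi)$, which only works for $0 \leqslant s < 3/2$, and then handles $s \geqslant 3/2$ by a separate bootstrap ($\mathcal K f = g$ with $f \in H^1_e$ forces $f \in H^{\min(3,s)}_e$, iterate). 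Your cutoff-plus-Rellich decomposition $\sigma = \sigma\chi_N + \sigma(1-\chi_N)$ treats all $s \geqslant 0$ uniformly and avoids the bootstrap, at the cost of being slightly more delicate to write down (the image of $J^{-2}(\sigma\chi_N\,\cdot)$ is not compactly supported, only uniformly decaying, and for large $s$ the tail bound needs more than the $W^{1,\infty}$ norm of $\sigma(1-\chi_N)$ --- harmless here since $\sigma$ and all its derivatives decay exponentially). You also supply the Wronskian justification that the second solution grows exponentially, a point the paper simply cites as known; that is a welcome addition rather than a deviation.
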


\begin{proof}
The fact that
\(
		\mathcal K \in \mathcal B( H_e^s(\mathbb R) )
\)
is obvious.
By the bounded inverse theorem it is enough
to prove that $\mathcal K$ is invertible.
Firstly, we regard the case $0 \leqslant s < 3/2$.
The composition
\(
	\langle \xi \rangle^{-2} \mathcal F \sigma
\)
is a Hilbert-Schmidt operator from $L^2(\mathbb R, dx)$
to $L^2 \left( \mathbb R, \langle \xi \rangle^{2s} d\xi \right)$.
Then from continuity of the injection
\(
	H^s(\mathbb R) \hookrightarrow L^2(\mathbb R)
\)
and of the inverse Fourier transform
\(
	\mathcal F^{-1} :
	L^2 \left( \mathbb R, \langle \xi \rangle^{2s} d\xi \right)
	\to H^s(\mathbb R)
\)
we deduce that $\mathcal K - 1$ is compact
in $H^s(\mathbb R)$.
Now let us assume that $\mathcal K$ is not invertible
in the subspace of even functions.
Then by the Fredholm alternative
there is a non-trivial
\(
	f_0 \in \ker \mathcal K
	\subset H_e^s(\mathbb R)
	,
\)
and so
\(
	f_0 = 2\gamma J^{-2} (\sigma f_0)
	.
\)
It implies that
\(
	f_0 \in H_e^2(\mathbb R)
\)
and
\(
	\left( - \partial_x^2 + 1 - 2\gamma \sigma \right) f_0 = 0
	.
\)
However, it is known about the last operator
that its kernel is spanned by $\sigma'$.
This implies that $f_0$ is odd, which is a contradiction.
Thus $\mathcal K$ is invertible.

Finally, regarding the case $s \geqslant 3/2$,
for any $g \in H_e^s(\mathbb R)$
one can find a unique $f \in H_e^1(\mathbb R)$
such that $\mathcal K f = g$.
Hence
\(
	f = 2\gamma J^{-2} (\sigma f) + g
\)
implying
$f \in H_e^{ \min (3, s) }(\mathbb R)$.
After several repetitions we arrive to
$f \in H_e^s(\mathbb R)$,
which proves invertibility of $\mathcal K$.
\end{proof}

\section{Proof}
\setcounter{equation}{0}

We start by introducing a mapping from
$H^s_e(\mathbb R) \times \mathbb R$ to $H^s_e(\mathbb R)$
as follows.
For $\varepsilon \ne 0$ we define
\begin{equation}
\label{Phi_definition}
	\Phi(v, \varepsilon) =
	v - \varepsilon^2 \left( \omega_{\varepsilon}^2 - M_{\varepsilon}^2 \right)^{-1}
	\left[
		\omega_{\varepsilon} F_{\varepsilon} v^2
		+
		\omega_{\varepsilon}  G_{\varepsilon} ( v H_{\varepsilon} v )
		+
		\varepsilon^2 T_{\varepsilon}(v,v,v)
	\right]
\end{equation}
and $\Phi(v, 0)$ is defined as a limit when $\varepsilon \to 0$ provided it exists.
Firstly, we analyse $\Phi$ around the hyperplane  $\varepsilon = 0$
and prove that $\Phi$ is continuous at $(\sigma, 0)$ with $\Phi(\sigma, 0) = 0$.
In order to simplify the presentation we restrict the domain of $\Phi$
to $B_1(\sigma) \times (-1, 1)$.
Due to \eqref{Stefanov_Wright_inverse} we need to consider
an expression of the form
\[
	J^{-2} G_{\varepsilon} ( v H_{\varepsilon} v )
	=
	G_0H_0 J^{-2} v^2
	+
	H_0 J^{-2} ( G_{\varepsilon} - G_0 ) v^2
	+
	G_{\varepsilon}
	\left( J^{-2} v J^2  \right)
	J^{-2} \left( H_{\varepsilon} - H_0 \right) v
	.
\]
Applying Lemma \ref{main_lemma}
and Corollary \ref{Selberg_Tesfahun_corollary_1}
we obtain
\[
	J^{-2} G_{\varepsilon} ( v H_{\varepsilon} v )
	=
	G_0H_0 J^{-2} v^2
	+
	O_{H^s}
	\left(
		| \varepsilon |^{ \frac {3 - 2\beta}{2 - \beta} }
	\right)
	.
\]
Similarly, by Lemma \ref{main_lemma} we estimate
\[
	J^{-2} F_{\varepsilon} v^2
	=
	F_0 J^{-2} v^2
	+
	O_{H^s}
	\left(
		| \varepsilon |^{ \frac {3 - 2\beta}{2 - \beta} }
	\right)
	.
\]
Finally, making use of Corollary \ref{main_corollary}
we obtain
\[
	J^{-2}
	\left( \omega_{\varepsilon} + M_{\varepsilon} \right)^{-1}
	\left[
		\omega_{\varepsilon} F_{\varepsilon} v^2
		+
		\omega_{\varepsilon}  G_{\varepsilon} ( v H_{\varepsilon} v )
	\right]
	=
	\frac 12
	\left[ F_0 + G_0H_0 \right]
	J^{-2} v^2
	+
	O_{H^s}
	\left(
		| \varepsilon |^{ \frac {3 - 2\beta}{2 - \beta} }
	\right)
\]
which together with \eqref{Phi_definition}
and  \eqref{Stefanov_Wright_inverse}
lead, as we show below, to
\begin{equation}
\label{Phi_around_zero}
	\Phi(v, \varepsilon)
	=
	v - \gamma J^{-2} v^2
	+
	o_{H^s} ( \varepsilon )
	+
	O_{H^s}
	\left(
		| \varepsilon |^{ 2 - \max(s_h, s_t) }  \norm{ v - \sigma }_{H^s}
	\right)
	.
\end{equation}
Indeed,
as one can easily see
\(
	\norm{H_{\varepsilon} \sigma}_{H^s}
	\leqslant
	C \norm{J_{\varepsilon}^{s_h} \sigma}_{H^s}
	\leqslant
	C \norm{\sigma}_{H^{s + s_h}}
	,
\)
and so
%
\[
	\varepsilon^2 G_{\varepsilon} ( v H_{\varepsilon} v )
	=
	\varepsilon^2 G_{\varepsilon} ( v H_{\varepsilon} \sigma )
	+
	\varepsilon^2 G_{\varepsilon} ( v H_{\varepsilon} (v - \sigma) )
	=
	O_{H^s}
	\left(
		\varepsilon^2
		+
		| \varepsilon |^{ 2 - s_h } \norm{ v - \sigma }_{H^s}
	\right)
	.
\]
Appealing to Assumption \ref{T_assumption} one obtains
\[
	\varepsilon^2 T_{\varepsilon} ( v, v, v )
	=
	\varepsilon^2 T_{\varepsilon} ( \sigma, \sigma, \sigma )
	+
	O_{H^s}
	\left(
		| \varepsilon |^{ 2 - s_t } \norm{ v - \sigma }_{H^s}
	\right)
	=
	o_{H^s}
	\left(
		\varepsilon
	\right)
	+
	O_{H^s}
	\left(
		| \varepsilon |^{ 2 - s_t } \norm{ v - \sigma }_{H^s}
	\right)
	,
\]
which finishes the proof of \eqref{Phi_around_zero}.

The first conclusion that we make from
Equality \eqref{Phi_around_zero} is that
$\Phi(v, 0)$ is well defined as a limit,
namely
\[
	\Phi(v, 0)
	=
	v - \gamma J^{-2} v^2
	,
\]
and in particular
\(
	\Phi(\sigma, 0) = 0
	.
\)
Secondly, we conclude that
\[
	\Phi(v, \varepsilon) =
	\mathcal K (v - \sigma)
	+
	O_{H^s}
	\left(
		\norm{ v - \sigma }_{H^s}^2
	\right)
	+
	o_{H^s} ( \varepsilon )
	+
	O_{H^s}
	\left(
		| \varepsilon |^{ 2 - \max(s_h, s_t) }  \norm{ v - \sigma }_{H^s}
	\right)
\]
with $\mathcal K$ defined in Lemma \ref{Stefanov_Wright_lemma_2},
which means in particular that
$\Phi$ is differentiable at the point $(\sigma, 0)$.

In order to appeal to the implicit function theorem
the following assumptions should be fulfilled
\begin{enumerate}
	\item
	$\Phi$ is continuous at the point $(\sigma, 0)$;
	\item
	$\Phi(\sigma, 0) = 0$;
	\item
	there exists $\partial_v \Phi$
	on the domain of $\Phi$ continuous at the point $(\sigma, 0)$;
	\item
	there exists
	\(
		( \partial_v \Phi(\sigma, 0) )^{-1}
		= \mathcal K^{-1}
	\)
	bounded in $H_e^s(\mathbb R)$.
\end{enumerate}
Note that the last assertion is proved in Lemma \ref{Stefanov_Wright_lemma_2},
and so it is left only to check the third statement.
A straightforward calculation for any $\varepsilon \ne 0$ gives
\begin{multline*}
	\partial_v \Phi(v, \varepsilon)w
	=
	w - \varepsilon^2 \left( \omega_{\varepsilon}^2 - M_{\varepsilon}^2 \right)^{-1}
	\left[
		2 \omega_{\varepsilon} F_{\varepsilon} (vw)
		+
		\omega_{\varepsilon}  G_{\varepsilon}
		( w H_{\varepsilon} v + v H_{\varepsilon} w )
	\right.	
	\\
	\left.
		+
		\varepsilon^2
		(
			T_{\varepsilon}(w,v,v)
			+
			T_{\varepsilon}(v,w,v)
			+
			T_{\varepsilon}(v,v,w)
		)
	\right]
\end{multline*}
and otherwise
\[
	\partial_v \Phi(v, 0)w
	=
	w - 2 \gamma J^{-2} (vw)
\]
for any $w \in H_e^s(\mathbb R)$.
Repeating the analysis given above one can obtain
\[
	\partial_v \Phi(v, \varepsilon) - \mathcal K
	=
	O_{ \mathcal B(H^s) }
	\left(
		\norm{ v - \sigma }_{H^s}
		+
		| \varepsilon |^{ \frac {3 - 2\beta}{2 - \beta} }
		+
		| \varepsilon |^{ 2 - \max(s_h, s_t) }
	\right)
\]
as
\(
	(v, \varepsilon) \to (\sigma, 0)
\)
in
\(
	H_e^s(\mathbb R) \times \mathbb R
	.
\)
Thus by the implicit function theorem
there exist $\varepsilon_0 > 0$
and a unique $V^{\varepsilon}$ defined for
\(
	\varepsilon \in ( - \varepsilon_0, \varepsilon_0 )
\)
and continuous at $\varepsilon = 0$ such that
\(
	\Phi( V^{\varepsilon}, \varepsilon ) = 0
\)
for each
\(
	\varepsilon \in ( - \varepsilon_0, \varepsilon_0 )
	.
\)

In order to obtain the asymptotic expression we
make use of \eqref{Phi_around_zero} again to deduce
\[
	V^{\varepsilon} - \sigma
	-
	\gamma \mathcal K^{-1} J^{-2} ( V^{\varepsilon} - \sigma )^2
	=
	o_{H^s}(\varepsilon)
	+
	O_{H^s}
	\left(
		| \varepsilon |^{ 2 - \max(s_h, s_t) }  \norm{ V^{\varepsilon} - \sigma }_{H^s}
	\right)
\]
and so
\[
	V^{\varepsilon} - \sigma
	=
	o_{H^s}(\varepsilon)
	+
	O_{H^s}
	\left(
		| \varepsilon |^{ 2 - \max(s_h, s_t) }  \norm{ V^{\varepsilon} - \sigma }_{H^s}
	\right)
	,
\]
which proves the asymptotic expression
and concludes the proof of Theorem \ref{main_theorem}.


\section{Conclusion}
\setcounter{equation}{0}

\subsection{(a,b,c,d)-Boussinesq systems}

Setting
\(
	K_a = 1 + a \partial_x^2
	,
\)
\(
	K_b = 1 - b \partial_x^2
	,
\)
\(
	K_c = 1 + c \partial_x^2
	,
\)
\(
	K_d = 1 - d \partial_x^2
\)
in \eqref{Regularised_sys},
where
\(
	a, b, c, d \in \mathbb R
	,
\)
we arrive to a system derived in \cite{Bona_Chen_Saut1}.
Its Cauchy problem was studied in \cite{Bona_Chen_Saut2}.
It exhibits solitary wave solutions,
as was shown in
\cite{Chen_Nguyen_Sun2010, Chen_Nguyen_Sun2011}
by the variational method.
Using Theorem \ref{main_theorem} we can extend significantly
their result at least for small surface tension,
namely, to a not necessary Hamiltonian case
corresponding to $b = d$.
Indeed,
let us shorten the range of coefficients $a, b, c, d$
in a way that System \eqref{Regularised_sys} is well posed
(see \cite{Bona_Chen_Saut2})
and $K_c$ is invertible, so that we obtain the first restriction
\begin{equation}
\label{abcd_condition_0}
	b, d \geqslant 0
	, \quad
	a,c \leqslant 0
	.
\end{equation}
Analyzing the symbol $M$ around zero
\[
	M(\xi)
	=
	\sqrt{
		\frac{( 1 - a\xi^2 )( 1 - c\xi^2 )}{( 1 + b\xi^2 )( 1 + d\xi^2 )}
	}
	=
	1 - \frac 12 (a + b + c + d)\xi^2 + O \left( \xi^4 \right)
\]
and taking into account \eqref{M_assumption_inequality_2}
we arrive to the second necessary restriction
\begin{equation}
\label{abcd_condition_1}
	a + b + c + d > 0
	.
\end{equation}
Condition \eqref{M_assumption_inequality_1} implies either
\begin{equation}
\label{abcd_condition_2}
	bd > ac
	\, \mbox{ or } \,
	bd = ac = 0
	.
\end{equation}
Finally, the fact that the loss of derivative $s_h < 2$,
associated with
\(
	H(\xi) = ( 1 + d\xi^2 ) / ( 1 - c\xi^2 )
	,
\)
leads to either
\begin{equation}
\label{abcd_condition_3}
	c < 0
	\, \mbox{ or } \,
	d = 0
	.
\end{equation}
One claims that Conditions
\eqref{abcd_condition_0}-\eqref{abcd_condition_3}
are sufficient to fulfil both
Assumptions \ref{main_assumption}, \ref{T_assumption},
and so to prove the existence and uniqueness of
small solitary waves.
Indeed,
\[
	T_{\varepsilon}(f, g, h)
	=
	\frac 12 G_{\varepsilon}
	\left(
		f \left( 1 - c \varepsilon^2 D^2\right)^{-1} (gh)
	\right)
	,
\]
where
\(
	G_{\varepsilon}
	=
	\left( 1 + b \varepsilon^2 D^2\right)^{-1}
	\left( 1 - c \varepsilon^2 D^2\right)
	\left( 1 + d \varepsilon^2 D^2\right)^{-1}
\)
is uniformly bounded due to
\eqref{abcd_condition_0}, \eqref{abcd_condition_1}
and so
\(
	\norm{ T_{\varepsilon} } \lesssim 1
	,
\)
which implies Assumption \ref{T_assumption}.
The rest is obvious.

\subsection{Aceves-S\'anchez-Minzoni-Panayotaros model}
Let us introduce a notation
\begin{equation}
\label{K_multiplier}
	K = \frac{\tanh D}{D}
\end{equation}
that we use in subsequent.
Assigning $K_a = K$
and
\(
	K_b = K_c = K_d = 1
\)
we arrive to a system introduced in
\cite{Aceves_Sanchez_Minzoni_Panayotaros}.
Its Cauchy problem was studied in \cite{Pei_Wang}.
It exhibits solitary wave solutions,
as was shown in
\cite{Nilsson_Wang}
by the variational method.
Note that
\(
	M = \sqrt{K}
	,
\)
\(
	F = 1/2
	,
\)
\(
	G = H = 1
	,
\)
and
\(
	T(f, g, h) = fgh/2
	.
\)
Thus we immediately obtain solitary waves
by Theorem \ref{main_theorem}.

\subsection{Hur-Pandey model}

Assigning $K_c = K$
and
\(
	K_a = K_b = K_d = 1
\)
we arrive to a system introduced in
\cite{Hur_Pandey}.
Note that
\(
	M = \sqrt{K}
	,
\)
\(
	F = 1/2
	,
\)
\(
	G = K
	,
\)
\(
	H = K^{-1}
	,
\)
and
\(
	T(f, g, h) = K \left( f K^{-1}(gh) \right) /2
	.
\)
Assumption \ref{main_assumption} is straightforward to check.
In order to proceed
note that
\[
	T_{\varepsilon}(f, g, h)
	=
	\frac 12 G_{\varepsilon}
	\left(
		f H_{\varepsilon} (gh)
	\right)
\]
and so appealing to
Corollary \ref{Selberg_Tesfahun_corollary_2}
one obtains Assumption \ref{T_assumption}
with $s_t = s_h = 1$.
Thus Theorem \ref{main_theorem} is applicable again.

\subsection{Dinvay-Dutykh-Kalisch model}

Setting eventually $K_c = 1$
and
\(
	K_a = K_b = K_d = K^{-1}
\)
we arrive to a system firstly
appeared in
\cite{Dinvay_Dutykh_Kalisch}.
It was proved later to be globally well-posed in
\cite{Dinvay_Tesfahun}.
It also admits existence of solitary waves
\cite{Dinvay_Nilsson} by the variational method.
It demonstrates a good performance even in simulations
of big waves \cite{Dinvay, Dinvay_Dutykh_Kalisch}.
Let us show that Theorem \ref{main_theorem} is applicable.
We have
\(
	M = \sqrt{K}
	,
\)
\(
	F = K/2
	,
\)
\(
	G = K^2
	,
\)
\(
	H = K^{-1}
	,
\)
and
\(
	T(f, g, h) = K^2 (fgh) /2
	.
\)
Assumptions \ref{main_assumption} and \ref{T_assumption}
are obviously satisfied.

As a final remark let us point out that
Theorem \ref{main_theorem} can be applied to
a more general Whitham-Boussinesq type system
as one introduced in Remark 5.3 of \cite{Emerald2020}.
It is possible to show that a linearisation
of System \eqref{Regularised_sys} around
a solitary wave solution leads to
an operator having both positive and negative essential spectrum.
This is a serious difficulty, so we leave investigation
of stability to a future work.


\vskip 0.05in
\noindent
{\bf Acknowledgments.}
{
	The author acknowledges the support of the ERC EU project 856408-STUOD.
}



\bibliographystyle{acm}
\bibliography{bibliography}

\begin{thebibliography}{10}

\bibitem{Aceves_Sanchez_Minzoni_Panayotaros}
{\sc Aceves-S{\'a}nchez, P., Minzoni, A., and Panayotaros, P.}
\newblock {Numerical study of a nonlocal model for water-waves with variable
  depth}.
\newblock {\em Wave Motion 50}, 1 (2013), 80--93.

\bibitem{Bona_Chen_Saut1}
{\sc Bona, Chen, and Saut}.
\newblock {Boussinesq Equations and Other Systems for Small-Amplitude Long
  Waves in Nonlinear Dispersive Media. I: Derivation and Linear Theory}.
\newblock {\em Journal of Nonlinear Science 12}, 4 (Aug 2002), 283--318.

\bibitem{Bona_Chen_Saut2}
{\sc Bona, J.~L., Chen, M., and Saut, J.-C.}
\newblock {Boussinesq equations and other systems for small-amplitude long
  waves in nonlinear dispersive media: {II}. The nonlinear theory}.
\newblock {\em Nonlinearity 17}, 3 (feb 2004), 925--952.

\bibitem{Chen_Nguyen_Sun2010}
{\sc Chen, M., Nguyen, N.~V., and Sun, S.-M.}
\newblock {Solitary-wave solutions to Boussinesq systems with large surface
  tension}.
\newblock {\em Discrete \& Continuous Dynamical Systems 26}, 4 (2010),
  1153--1184.

\bibitem{Chen_Nguyen_Sun2011}
{\sc Chen, M., Nguyen, N.~V., and Sun, S.-M.}
\newblock {Existence of traveling-wave solutions to Boussinesq systems}.
\newblock {\em Differential Integral Equations 24}, 9/10 (09 2011), 895--908.

\bibitem{Dinvay}
{\sc Dinvay, E.}
\newblock {On well-posedness of a dispersive system of the Whitham--Boussinesq
  type}.
\newblock {\em Applied Mathematics Letters 88\/} (2019), 13--20.

\bibitem{Dinvay_Dutykh_Kalisch}
{\sc Dinvay, E., Dutykh, D., and Kalisch, H.}
\newblock {A comparative study of bi-directional Whitham systems}.
\newblock {\em Applied Numerical Mathematics 141\/} (2019), 248--262.
\newblock Nonlinear Waves: Computation and Theory-X.

\bibitem{Dinvay_Nilsson}
{\sc Dinvay, E., and Nilsson, D.}
\newblock Solitary wave solutions of a whitham–boussinesq system.
\newblock {\em Nonlinear Analysis: Real World Applications 60\/} (2021),
  103280.

\bibitem{Dinvay_Tesfahun}
{\sc Dinvay, E., Selberg, S., and Tesfahun, A.}
\newblock {Well-Posedness for a Dispersive System of the Whitham--Boussinesq
  Type}.
\newblock {\em SIAM Journal on Mathematical Analysis 52}, 3 (2020), 2353--2382.

\bibitem{Ehrnstrom_Groves_Wahlen}
{\sc Ehrnstr{\"o}m, M., Groves, M.~D., and Wahl{\'e}n, E.}
\newblock {On the existence and stability of solitary-wave solutions to a class
  of evolution equations of Whitham type}.
\newblock {\em Nonlinearity 25}, 10 (sep 2012), 2903--2936.

\bibitem{Emerald2020}
{\sc Emerald, L.}
\newblock {Rigorous Derivation from the Water Waves Equations of Some Full
  Dispersion Shallow Water Models}.
\newblock {\em SIAM Journal on Mathematical Analysis 53}, 4 (2021), 3772--3800.

\bibitem{Friesecke_Pego1999}
{\sc Friesecke, G., and Pego, R.~L.}
\newblock Solitary waves on {FPU} lattices: I. qualitative properties,
  renormalization and continuum limit.
\newblock {\em Nonlinearity 12}, 6 (oct 1999), 1601--1627.

\bibitem{Hur_Pandey}
{\sc Hur, V.~M., and Pandey, A.~K.}
\newblock {Modulational instability in a full-dispersion shallow water model}.
\newblock {\em Studies in Applied Mathematics 142}, 1 (2019), 3--47.

\bibitem{Klainerman_Selberg}
{\sc Klainerman, S., and Selberg, S.}
\newblock {Bilinear estimates and applications to nonlinear wave equations}.
\newblock {\em Communications in Contemporary Mathematics 04}, 02 (2002),
  223--295.

\bibitem{Nilsson_Wang}
{\sc Nilsson, D., and Wang, Y.}
\newblock {Solitary wave solutions to a class of Whitham--Boussinesq systems}.
\newblock {\em Zeitschrift f{\"u}r angewandte Mathematik und Physik 70}, 3 (Apr
  2019), 70.

\bibitem{Pei_Wang}
{\sc Pei, L., and Wang, Y.}
\newblock {A note on well-posedness of bidirectional Whitham equation}.
\newblock {\em Applied Mathematics Letters 98\/} (2019), 215--223.

\bibitem{Selberg_Tesfahun}
{\sc Selberg, S., and Tesfahun, A.}
\newblock {Low regularity well-posedness of the Dirac-Klein-Gordon equations in
  one space dimension}.
\newblock {\em Communications in Contemporary Mathematics 10}, 02 (2008),
  181--194.

\bibitem{Stefanov_Wright2018}
{\sc Stefanov, A., and Wright, J.~D.}
\newblock {Small Amplitude Traveling Waves in the Full-Dispersion Whitham
  Equation}.
\newblock {\em Journal of Dynamics and Differential Equations\/} (Oct 2018).

\bibitem{Weinstein1987}
{\sc Weinstein, M.~I.}
\newblock {Existence and dynamic stability of solitary wave solutions of
  equations arising in long wave propagation}.
\newblock {\em Communications in Partial Differential Equations 12}, 10 (1987),
  1133--1173.

\end{thebibliography}

\end{document}